\newtheorem*{rep@theorem}{\rep@title}
\newcommand{\newreptheorem}[2]{%
\newenvironment{rep#1}[1]{%
 \def\rep@title{#2 \ref{##1}}%
 \begin{rep@theorem}}%
 {\end{rep@theorem}}}
\newtheorem{lem}{Lemma}
\newtheorem{thm}{Theorem}
\newtheorem{dfn}{Definition}
\newtheorem*{prb*}{Problem}
\newtheorem*{lem*}{Lemma}
\providecommand{\e}{\varepsilon}
\providecommand{\f}{\varphi}
\providecommand{\x}{\xi}
\providecommand{\R}{\mathbb{R}}
\providecommand{\Z}{\mathbb{Z}}
\providecommand{\N}{\mathbb{N}}
\providecommand{\D}{\mathbb{D}}
\title{Areas spanned by point configurations in the plane}
\author{Alex McDonald \\ University of Rochester \\ Rochester, NY}
\begin{document}

\maketitle

\begin{abstract}
We consider an over-determined Falconer type problem on $(k+1)$-point configurations in the plane using the group action framework introduced in \cite{GroupAction}.  We define the area type of a $(k+1)$-point configuration in the plane to be the vector in $\R^{\binom{k+1}{2}}$ with entries given by the areas of parallelograms spanned by each pair of points in the configuration.  We show that the space of all area types is $2k-1$ dimensional, and prove that a compact set $E\subset\R^d$ of sufficiently large Hausdorff dimension determines a positve measure set of area types.
\end{abstract}

\section{Introduction}

One of the most important and interesting problems in geometric measure theory is the Falconer distance problem, first posed by Falconer in 1986 \cite{Falconer}.  Given a set $E\subset\R^d$, define its distance set to be $\Delta(E):=\{\|x-y\|:x,y\in E\}$.  The Falconer distance problem asks how large the Hausdorff dimension of a compact set $E$ must be to ensure the distance set $\Delta(E)$ has positive Lebesgue measure.  Falconer proved that $|\Delta(E)|>0$ if $\dim E>\frac{d+1}{2}$, and showed that there were sets with dimension arbitrarily close to but less that $\frac{d}{2}$ such that $|\Delta(E)|=0$, implying that the correct threshold is in the range $\left[\frac{d}{2},\frac{d+1}{2}\right]$.  Falconer conjectured that the correct threshold was in fact $\frac{d}{2}$.  \\

In 1987 Mattila \cite{Mattila} introduced a framework which became fundamental to the study of the Falconer problem.  Given a measure $\mu$, he considered the quantity

\[
\mathcal{M}(\mu):=\int\left(\int_{S^{d-1}}|\widehat{\mu}(t\omega)|^2\:d\omega\right)^2t^{d-1}\: dt
\]

now known as the Mattila integral.  He proved that if $\mu$ is a measure supported on $E$ with finite energy integral and $\mathcal{M}(\mu)<\infty$, then $|\Delta(E)|>0$.  This insight was used by Wolff \cite{WolffExp} in 1999 to prove that if $E\subset\R^2$ is compact with $\dim E>\frac{4}{3}$ then $|\Delta(E)|>0$, and by Erdogan \cite{ErdoganExp} in 2006 to obtain the threshold $\frac{d}{2}+\frac{1}{3}$ in dimension $d$, matching Wolff in the $d=2$ case and improving Falconer's result when $d\geq 3$.  These were the best known results until recently, when the decoupling theorem of Bourgain and Demeter \cite{Decoupling} provided a new tool to attack the problem.  This led to several improvements in quick succession.  The best results currently state that for compact $E\subset \R^d$, the distance set $\Delta(E)$ has positive Lebesgue measure if $\dim E>s_d$ where

\[
s_d=
\begin{cases}
5/4, & d=2 \  \cite{GIOW} \\
9/5, & d=3  \  \cite{GuthEtAl} \\
\frac{d}{2}+\frac{1}{4}, & d\geq 4, d \text{ even } \  \cite{evendim} \\
\frac{d}{2}+\frac{1}{4}+\frac{1}{4(d-1)}, & d\geq 4, d \text{ odd } \  \cite{DZ}
\end{cases}
\]

A major breakthrough occured in the discrete version of the Falconer problem, known as the Erdos distinct distance problem, when Elekes and Sharir \cite{ES} observed that the problem could be studied in terms of the action of the isometry group, since two pairs of points have the same distance if and only if there exists an isometry mapping one pair to the other.  Guth and Katz \cite{GK} solved the Erdos distinct distance problem in the plane using this framework.  Greenleaf, Iosevich, Liu, and Palsson \cite{GroupAction} then applied this framework to the study of the Falconer problem.  They observed the original Mattila integral $\mathcal{M}(\mu)$ was a constant multiple of the integral

\[
\int\int |\widehat{\mu}(\x)|^2|\widehat{\mu}(g\x)|^2\:dg\:d\x,
\]

where $dg$ denotes a Haar measure on the orthogonal group $O(\R^d)$.  Using this observation, they give a new proof of Falconer's $\frac{d+1}{2}$ threshold in terms of this group action.  More generally, for any $k\leq d$ two $(k+1)$-point configurations will be congruent (in the sense that all pairwise distances between points are the same) if and only if there is a rigid motion which takes one configuration to the other.  Using this observation, the authors consider the following generalization of Falconer's problem.  Let $\Delta_k(E)$ be the set of congruence classes of $(k+1)$-point configurations of points in $E$ for $k\leq d$.  There are $\binom{k+1}{2}$ pairwise distances, so the space of all possible congruence classes can be identified with $\R^{\binom{k+1}{2}}$.  The authors prove that if $\dim E\geq d-\frac{d-1}{k+1}$ then $\Delta_k(E)$ has positive $\binom{k+1}{2}$ dimensional Lebesgue measure.  \\

The results of \cite{GroupAction} were extended by Chatzikonstantinou, Iosevich, Mkrtchyan, and Pakianathan in \cite{Rigidity}, where the authors establish a non-trivial dimensional threshold in the $k>d$ case as well.  The main obstacle here is that the system becomes over-determined; when $k\leq d$ one must specify all $\binom{k+1}{2}$ distances to determine a congruence class, whereas when $k>d$ one only needs to specify some distances.  As a result, the space of congruence classes is no longer $\R^{\binom{k+1}{2}}$, but rather a lower dimensional subset.  The authors prove the space of congruence classes can be identified with a space of dimension $d(k+1)-\binom{d+1}{2}$ with a natural measure, and use the group action framework to prove that compact sets $E\subset \R^d$ of dimension greater than $d-\frac{1}{k+1}$ determine a positive measure set of congruence classes.\\

In this paper, we study another over-determined point configuration problem using the group action framework.  Specifically, given a $(k+1)$-point configuration, instead of looking at pairwise distances we look at the areas of the parallelograms spanned by pairs of points in our configuration.  More precisely, we have the following definition

\begin{dfn}
Given a $(k+1)$-point configuration $x=(x^1,...,x^{k+1})\in (\R^2)^{k+1}$, the \textbf{area type} of $x$ is the vector

\[
(x^i\cdot x^{j\perp})_{1\leq i<j\leq k+1}\in\R^{\binom{k+1}{2}}.
\]

Given a set $E\subset\R^2$, the set of area types determined by $(k+1)$-point configurations in $E$ is 

\[
\mathcal{A}_k(E)=\{(x^i\cdot x^{j\perp})_{1\leq i<j\leq k+1}:x \in E^{k+1}\}
\]

Denote $\mathcal{A}_k(\R^2)$ by $\mathcal{A}_k$.
\end{dfn}

It is easy to see that a configuration has area type $\textbf{0}$ if and only if all points of the configuration lie on a common line through the origin.  Other than this degenerate case, we will see that two configurations $x$ and $y$ have the same area type if and only if there exists a unique $g\in \text{SL}_2(\R)$ such that $y^i=gx^i$ for all $i$.  Thus, this notion of equivalence is (outside a negligible class of degenerate configurations) consistent with the group action of $\text{SL}_2(\R)$.  \\

In \cite{Liu} Liu develops a general framework for problems concerning orbits of point configurations under group actions.  In Liu's setup, one considers a group $G$ acting on $\R^d$ and a smooth map $\Phi:(\R^d)^{k+1}\to\R^m$ satisfying certain conditions, and with the property that $\Phi(x^1,\cdots ,x^{k+1})=\Phi(y^1,\cdots,y^{k+1})$ if and only if there exists $g\in G$ such that $y^i=gx^i$ for each $i$.  Under these assumptions, Liu establishes a sufficient condition on compact sets $E\subset\R^d$ to ensure $\Delta_\Phi:=\{\Phi(x):x\in E^{k+1}\}\subset \R^m$ has positive $m$-dimensional Lebesgue measure.  However, the conditions assumed on $\Phi$ cannot be met if the image of $\Phi$ is contained in a lower dimensional submanifold of $\R^m$, as is the case in the over-determined problem considered here (it is also clear that the conclusion of Liu's theorem couldn't possibly hold in this case).  \\

Liu considers the action of $\text{SL}_2(\R)$ in the case $k=1$ (\cite{Liu}, Theorem 1.4).  We shall see that the space of non-degenerate area types has dimension $2k-1$, so this problem becomes over-determined for $k>2$.  In this case, we will see that $\mathcal{A}_k$ can be identified with a $(2k-1)$-flat in $\R^{2k+2}$ which we equip with the $2k-1$ dimensional Lebesgue measure $\mathcal{L}_{2k-1}$.  The set $\mathcal{A}_k(E)$ can then be viewed as a subset of that space.  With this setup, our main result is as follows.

\begin{thm}
\label{main}
Let $E\subset\R^2$ be a compact set with $\dim E>2-\frac{1}{2k}$.  Then, $\mathcal{L}_{2k-1}(\mathcal{A}_k(E))>0$.
\end{thm}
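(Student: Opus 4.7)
The plan is to take a Frostman probability measure $\mu$ on $E$ satisfying $\mu(B(x,r))\lesssim r^s$ for some $s>2-\tfrac{1}{2k}$, push forward $\mu^{k+1}$ by the area map $F$, and view the resulting measure $\nu$ as a probability measure on the $(2k-1)$-flat identified with $\mathcal{A}_k$. Once $\|\nu\|_{L^2(\mathcal{L}_{2k-1})}<\infty$ is established, applying Cauchy--Schwarz to $\nu$ (which has total mass $1$ and is supported on $\mathcal{A}_k(E)$) yields
\[
1 \;=\; \nu(\mathcal{A}_k(E))^2 \;\le\; \mathcal{L}_{2k-1}(\mathcal{A}_k(E))\,\|\nu\|_{L^2}^2,
\]
which proves the theorem.

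Following the group-action framework of \cite{GroupAction} and its over-determined extension in \cite{Rigidity}, since two configurations are area-equivalent iff they differ by a unique $g\in\text{SL}_2(\R)$ (outside the degenerate locus of configurations on a line through the origin), the coarea formula through the $\text{SL}_2(\R)$-orbits of $F$ converts
\[
\|\nu\|_{L^2}^2 \;\sim\; \int_\Omega \int_{(\R^2)^{k+1}} K(g,x)\, d\mu^{k+1}(x)\, d\mu^{k+1}(gx)\, dg,
\]
where $dg$ is a Haar measure on $\text{SL}_2(\R)$, $\Omega\subset \text{SL}_2(\R)$ is the precompact set of $g$ with $gE\cap E\neq\emptyset$ (finite by the boundedness of $E$ and the constraint $\det g=1$), and $K(g,x)$ collects the coarea Jacobians from the fiber parameterization. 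After regularizing $\mu$, applying Plancherel to each of the $k+1$ pairs $(d\mu(x^i),d\mu(gx^i))$ rewrites the inner integral as a Mattila-type Fourier expression whose key building block is the correlation $\int \widehat{\mu}(\xi)\,\overline{\widehat{\mu}(g^{-t}\xi)}\,d\xi$, appearing $k+1$ times.

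To estimate this integral I would parameterize $\text{SL}_2(\R)$ via its Iwasawa or $KAK$ decomposition and combine with the $s$-energy bound $\int |\widehat{\mu}(\xi)|^2|\xi|^{s-2}\,d\xi<\infty$. The non-compact $A$-component rescales frequencies, so its contribution must be carefully balanced against the $(k+1)$-fold Fourier product; the threshold $s>2-\tfrac{1}{2k}$ should arise from this balance, with the factor $1/(2k)$ reflecting the number of effective frequency directions remaining after quotienting out the $3$-parameter $\text{SL}_2(\R)$-action on $(\R^2)^{k+1}$.

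The main technical obstacle is handling the non-compactness of $\text{SL}_2(\R)$: unlike in the classical Falconer and rigid-motion settings, orbits under $\text{SL}_2(\R)$ can stretch frequencies without bound, so convergence of the orbital integral is delicate and the sharpness of the exponent $\tfrac{1}{2k}$ depends on a precise dyadic analysis in the $A$-direction, where the frequency dilation must be paid for by a matching portion of the $s$-energy. A secondary issue is the degenerate locus where $F$ is not a submersion and $K(g,x)$ blows up; this can be neutralized by first translating $E$ so that $\mu$ assigns zero mass to every line through the origin, allowing us to restrict to the submersion locus where the Jacobian remains bounded.
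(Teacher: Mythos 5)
Your opening is sound and matches the paper's skeleton: push $\mu^{k+1}$ forward under the area map, reduce to a uniform $L^2$ bound on (a mollification of) the pushforward, and convert the resulting double integral into an orbital integral over $\text{SL}_2(\R)$ using the fact that equal area types correspond to a unique $g\in\text{SL}_2(\R)$ off the degenerate locus. But the proposal stops exactly where the real work begins. The entire content of the theorem is the derivation of the exponent $2-\frac{1}{2k}$, and you explicitly defer it (``the threshold $s>2-\tfrac{1}{2k}$ should arise from this balance''). In the paper this comes from a Littlewood--Paley decomposition $\mu=\sum_j\mu_j$: one bounds $k-1$ of the $k+1$ factors trivially by $\|\mu_j\|_{L^\infty}\lesssim 2^{j(2-s)}$, reducing to the $k=1$ case, which is then handled by the generalized Radon transform $\mathcal{R}_t f(x)=\int_{x\cdot y^\perp=t}f\,d\sigma$ together with the Fourier integral operator smoothing estimate $\mathcal{R}_t:L^2\to L^2_{1/2}$ and almost-orthogonality of separated dyadic pieces. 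The convergence condition $(2-s)(k-1)-\tfrac12+(2-s)<0$ is precisely $s>2-\frac{1}{2k}$. A KAK/Iwasawa analysis of the correlation $\int\widehat{\mu}(\xi)\overline{\widehat{\mu}(g^{-t}\xi)}\,d\xi$ might conceivably be made to work, but you have not carried it out, and the heuristic that $\frac{1}{2k}$ counts ``effective frequency directions'' does not produce an estimate.

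Two of the steps you do assert are wrong as stated. First, $\{g\in\text{SL}_2(\R):gE\cap E\neq\emp\}$ is \emph{not} precompact merely because $E$ is bounded and $\det g=1$: for $E$ in the unit annulus and $g=\mathrm{diag}(t,1/t)$ with $t$ large, $g$ still maps points near $(1/t,1)$ into the annulus. The compactness that makes the orbital integral converge comes instead from quantitative non-degeneracy of the configuration: if $|x^1\cdot x^{2\perp}|>c$ and both $gx^1,gx^2$ must land in the unit disk, then $\|g\|\lesssim 1/c$. The paper manufactures this by restricting to a dyadic annulus and then to $k+1$ angle-separated subsets $E_1,\dots,E_{k+1}$ of positive $\mu$-measure, so that every configuration in $E_1\times\cdots\times E_{k+1}$ is $c$-non-degenerate; your proposal has no substitute for this reduction. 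Second, you cannot ``first translate $E$'' to avoid the degenerate locus: the area type $x^i\cdot x^{j\perp}$ is not translation-invariant, so $\mathcal{A}_k(E+v)$ has nothing to do with $\mathcal{A}_k(E)$ and proving the conclusion for a translate does not prove it for $E$. (The qualitative issue you are trying to fix is in any case vacuous here, since a Frostman measure with exponent $s>1$ already gives zero mass to every line; what is genuinely needed, and missing, is the uniform lower bound $|x^1\cdot x^{2\perp}|>c$.)
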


We note that throughout, we are working with the signed area $x\cdot y^\perp$ rather than unsigned area $|x\cdot y^\perp|$.  It is clear by the pigeonhole principle that the above theorem applies to unsigned area types as well. \\

Lastly, we want to discuss the extent to which the above theorem is sharp.  In \cite{Rigidity} the authors obtained the slightly better threshold $2-\frac{1}{k+1}$ in the analogous problem for congruence classes of $(k+1)$-point configurations.  While the translation invariance in that problem allowed some arguments that don't work in our present setting, it is nevertheless reasonable to conjecture that the $2-\frac{1}{k+1}$ threshold should be true here as well. \\

We can, however, say that the correct threshold must tend to $2$ as $k\to\infty$.  In particular, we have the following result.

\begin{thm}[Sharpness]
\label{Sharpness}
Let $k\in\N$ and let $s<2-\frac{4}{2k+1}$.  There exists a compact set $E\subset\R^2$ with $\dim E=s$ and $\mathcal{L}_{2k-1}(\mathcal{A}_k(E))=0$.
\end{thm}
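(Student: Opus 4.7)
The approach is a Falconer-style limsup construction combined with a counting argument for lattice area types. Fix $s < 2 - \frac{4}{2k+1}$ and set $\alpha = 2/s$, so that $\alpha > (2k+1)/(2k-1)$. I would choose a rapidly growing sequence of positive integers $q_1 \mid q_2 \mid q_3 \mid \cdots$ with $q_j \to \infty$, set $\delta_j = q_j^{-\alpha}$, and form the neighborhoods
\[
F_j = \bigcup_{p \in q_j^{-1}\Z^2 \cap [0,1]^2} B(p, \delta_j/2).
\]
A standard nested Cantor-style construction (with $q_{j+1}$ chosen large enough at each stage to guarantee that the stage-$(j{+}1)$ balls sit inside the stage-$j$ balls) produces a compact $E \subset \bigcap_j F_j$ of Hausdorff dimension $s = 2/\alpha$; this is verified via the mass distribution principle applied to the natural product probability measure supported on $E$.

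Next I would bound the area type set via the containment $\mathcal{A}_k(E) \subset \mathcal{A}_k(F_j)$. Any $(k+1)$-tuple $(x^1, \ldots, x^{k+1}) \in F_j^{k+1}$ has each $x^i$ within $\delta_j$ of a rational lattice point $p^i \in q_j^{-1}\Z^2 \cap [0,1]^2$, and bilinearity of $(x,y) \mapsto x \cdot y^\perp$ together with the uniform bound $|x^i|, |p^i| \leq \sqrt{2}$ yields $|x^i \cdot x^{j\perp} - p^i \cdot p^{j\perp}| = O(\delta_j)$ for every pair. Denoting by $T_j$ the finite set of area types of $(k+1)$-point configurations drawn from $q_j^{-1}\Z^2 \cap [0,1]^2$, it follows that $\mathcal{A}_k(E)$ is contained in the $O(\delta_j)$-neighborhood of $T_j$ within the $(2k-1)$-flat $\mathcal{A}_k$, giving
\[
\mathcal{L}_{2k-1}(\mathcal{A}_k(E)) \lesssim |T_j| \cdot \delta_j^{2k-1}.
\]

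The main obstacle is the combinatorial estimate $|T_j| \lesssim q_j^{2k+1}$. The naive bound from viewing $T_j$ as sitting inside a $(2k-1)$-dimensional lattice of spacing $q_j^{-2}$ within a bounded region gives only $|T_j| \lesssim q_j^{4k-2}$, which is far too weak. To improve this I would exploit the fact that the area type map factors through the $\text{SL}_2(\R)$-quotient: parameterize a generic lattice area type by a base pair $(x^1, x^2)$ with $a_{12} = \det(x^1, x^2) \neq 0$ together with the additional pairs $(a_{1i}, a_{2i})$ for $i \geq 3$, using the linear reconstruction $x^i = (a_{1i} x^2 - a_{2i} x^1)/a_{12}$ coming from the two-equation linear system for $x^i$. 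The requirement that each reconstructed $x^i$ lie in the lattice imposes a congruence modulo $a_{12}$ on $(a_{1i}, a_{2i})$ together with a box constraint, while the remaining area coordinates $a_{ij}$ for $3 \leq i < j \leq k+1$ are forced by the Plücker-type relations $a_{12} a_{ij} = a_{1i} a_{2j} - a_{1j} a_{2i}$. A careful count, summing over $a_{12}$ using standard divisor-sum estimates, should produce the desired saving over the trivial $q_j^{2k+2}$ configuration count.

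Once $|T_j| \lesssim q_j^{2k+1}$ is established, the estimate
\[
\mathcal{L}_{2k-1}(\mathcal{A}_k(E)) \lesssim q_j^{2k+1-\alpha(2k-1)}
\]
combined with $\alpha > (2k+1)/(2k-1)$ makes the exponent strictly negative, so letting $j \to \infty$ forces $\mathcal{L}_{2k-1}(\mathcal{A}_k(E)) = 0$, completing the proof.
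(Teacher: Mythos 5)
Your overall architecture --- a nested Cantor construction inside $\delta_j$-neighborhoods of a $q_j^{-1}$-lattice with $\delta_j=q_j^{-2/s}$, the containment $\mathcal{A}_k(E)\subset\mathcal{A}_k(F_j)$, the $O(\delta_j)$ stability of the areas, and the final bound $\mathcal{L}_{2k-1}(\mathcal{A}_k(E))\lesssim |T_j|\,\delta_j^{2k-1}$ --- is exactly the paper's. The gap is in the step you yourself flag as the main obstacle: the count $|T_j|\lesssim q^{2k+1}$ is \emph{false} for the square lattice $q^{-1}\Z^2\cap[0,1]^2$ once $k\geq 2$. By Lemma \ref{action}, two non-degenerate configurations share an area type iff they are related by some $g\in\text{SL}_2(\R)$. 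Now suppose the points $qx^1,\dots,qx^{k+1}\in\Z^2$ generate $\Z^2$ as a $\Z$-module (equivalently, the integers $q^2\,x^i\cdot x^{j\perp}$ have gcd $1$), and suppose in addition that $x^1,x^2$ have norms $\gtrsim 1$ and angle $\gtrsim 1$ between them. Both conditions hold for a proportion $\asymp 1$ of all $q^{2k+2}$ configurations when $k\geq2$. For such a configuration, any $g$ carrying it to another lattice configuration must map $\Z^2$ into $\Z^2$, hence lies in $\text{SL}_2(\Z)$; and the requirement that $gx^1,gx^2$ stay in the unit box forces $\|g\|=\|(gx^1\ gx^2)(x^1\ x^2)^{-1}\|\lesssim 1$, leaving only $O(1)$ admissible $g$. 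So a positive proportion of configurations lie in area-type classes of size $O(1)$, whence $|T_j|\gtrsim q^{2k+2}$. No divisor-sum refinement of the $(a_{12},a_{1i},a_{2i})$ parameterization can get around this: the congruence constraint you describe is relative to a \emph{fixed} base pair $(x^1,x^2)$, and distinct base pairs with the same $a_{12}$ produce essentially disjoint families of admissible $(a_{1i},a_{2i})$-tuples, so one must sum over the $\approx q^4$ base pairs rather than over the $\approx q^2$ values of $a_{12}$. With $|T_j|\asymp q^{2k+2}$ your final computation only closes for $s<2-\frac{3}{k+1}$, which is strictly weaker than the claimed $s<2-\frac{4}{2k+1}$.

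The paper's construction is designed precisely to defeat this arithmetic rigidity: it replaces the square lattice by its image under $\psi(x,y)=xe^{i\pi y/2}$, a ``polar lattice'' of $\approx q$ radii times $\approx q$ angles filling a quarter-annulus. This set is essentially invariant under the $\approx q$ rotations by multiples of $\pi/(2q)$, each of which lies in $\text{SL}_2(\R)$ and hence preserves area types. Rotating each class representative clockwise until one point lands on the positive $x$-axis gives an injection from area-type classes into configurations with a distinguished point on the axis, of which there are only $\approx q\cdot(q^2)^k=q^{2k+1}$; this is exactly the factor-$q$ saving you need. Since $\psi$ is bi-Lipschitz with Lipschitz inverse on $[1/2,1]\times[0,1]$, the Hausdorff dimension computation for the nested construction is unaffected. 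If you substitute this polar lattice for your square one and replace your speculative counting argument with the rotation injection, the rest of your write-up goes through and coincides with the paper's proof.
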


\subsection{Acknowledgments}

This paper is part of the author's Ph.D. thesis.  The author would like to thank Alex Iosevich for suggesting the problem and for being a constant source of advice and encouragement.

\section{The group action viewpoint}

The notion of area type defined in the introduction gives an equivalence relation on the set of $(k+1)$-point configurations, where two configurations are equivalent if they have the same area type.  One of the key observations we use to prove the main theorem is that for typical configurations, the equivalence classes under this relation are the orbits of the natural action of the group $\text{SL}_2(\R)$ on the space of configurations.  More precisely, we have the following.

\begin{dfn}
A $(k+1)$-point configuration $x$ is called \textbf{degenerate} if $\{x^1,x^2\}$ is linearly dependent, and \textbf{non-degenerate} otherwise.  Similarly, $x$ is called $c$-degenerate for $c>0$ if $|x^1\cdot x^{2\perp}|<c$, and $c$-non-degenerate otherwise.
\end{dfn}

Since a $c$-non-degenerate configuration is also $c'$-non-degenerate for $c'<c$, we will always assume $c<1$ when $c$-non-degeneracy is assumed.

\begin{lem}
\label{action}
Let $x,y\in(\R^2)^{k+1}$ be non-degenerate configurations.  Then $x$ and $y$ have the same area type if and only if there is a unique $g\in\text{SL}_2(\R)$ such that for each $i$, we have $y^i=gx^i$.
\end{lem}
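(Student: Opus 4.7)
The plan is to prove both directions directly by exploiting the fact that the bilinear form $(v,w)\mapsto v\cdot w^\perp$ is exactly the determinant $\det[v\,w]$, which is $\text{SL}_2(\R)$-invariant.

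For the \emph{if} direction, suppose $y^i=gx^i$ for some $g\in\text{SL}_2(\R)$. Since $v\cdot w^\perp=\det[v\,w]$ and the determinant is multiplicative, we get $y^i\cdot y^{j\perp}=\det[gx^i\,gx^j]=\det(g)\det[x^i\,x^j]=x^i\cdot x^{j\perp}$, so the area types agree. Uniqueness follows from non-degeneracy: since the area type is preserved, $y^1\cdot y^{2\perp}=x^1\cdot x^{2\perp}\ne 0$, so $\{y^1,y^2\}$ is a basis of $\R^2$; any $g,g'$ sending $x^1,x^2$ to $y^1,y^2$ must agree on this basis, hence agree everywhere.

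For the \emph{only if} direction, I would start from the non-degeneracy of $x$: since $\{x^1,x^2\}$ is a basis, there is a unique linear map $g:\R^2\to\R^2$ with $gx^1=y^1$ and $gx^2=y^2$. Then I would check two things. First, $\det(g)=1$: compute
\[
\det(g)=\frac{\det[gx^1\,gx^2]}{\det[x^1\,x^2]}=\frac{y^1\cdot y^{2\perp}}{x^1\cdot x^{2\perp}}=1,
\]
by the assumption that area types match (and the denominator is nonzero by non-degeneracy). Second, $gx^i=y^i$ for $i\ge 3$. For this, expand $x^i=\alpha_i x^1+\beta_i x^2$ and recover the coefficients by pairing against $x^{1\perp}$ and $x^{2\perp}$:
\[
\alpha_i=\frac{x^i\cdot x^{2\perp}}{x^1\cdot x^{2\perp}},\qquad \beta_i=\frac{x^1\cdot x^{i\perp}}{x^1\cdot x^{2\perp}}.
\]
Then $gx^i=\alpha_i y^1+\beta_i y^2$. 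Setting $z^i:=y^i-\alpha_i y^1-\beta_i y^2$, I compute $z^i\cdot y^{1\perp}$ and $z^i\cdot y^{2\perp}$ and use the equalities $x^1\cdot x^{i\perp}=y^1\cdot y^{i\perp}$, $x^2\cdot x^{i\perp}=y^2\cdot y^{i\perp}$, and $x^1\cdot x^{2\perp}=y^1\cdot y^{2\perp}$ to show both pairings vanish. Since $\{y^{1\perp},y^{2\perp}\}$ spans $\R^2$, we conclude $z^i=0$ and hence $gx^i=y^i$.

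There is no deep obstacle here; the argument is essentially linear algebra with the symplectic form $v\cdot w^\perp$. The only point requiring care is bookkeeping with the skew-symmetry $v\cdot w^\perp=-w\cdot v^\perp$ and the convention $i<j$ used in the definition of the area type, so that the entries of the area type indexed by pairs $(1,i)$ and $(2,i)$ are correctly matched with the coefficients $\alpha_i,\beta_i$ above.
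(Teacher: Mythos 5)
Your proof is correct and takes essentially the same approach as the paper: both construct $g$ as the unique linear map sending $x^1,x^2$ to $y^1,y^2$ and then verify $gx^i=y^i$ for $i\geq 3$ by matching the coefficients of $x^i$ and $y^i$ in the bases $\{x^1,x^2\}$ and $\{y^1,y^2\}$ via the form $v\cdot w^\perp$. Your explicit check that $\det g=1$ is a small point the paper leaves implicit; otherwise the arguments coincide.
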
 

\begin{proof}
First, suppose $x$ and $y$ have the same area types.  Because $x$ is non-degenerate, $x^1\cdot x^{2\perp}\neq 0$.  Equivalently, the $2\times 2$ matrix with columns $x^1,x^2$ is non-singular; we denote this matrix by $(x^1\ x^2)$.  Let

\[
g=(y^1\ y^2)(x^1\ x^2)^{-1}.
\]

Note that $g(x^1\ x^2)=(gx^1\ gx^2)$, so this implies $gx^1=y^1$ and $gx^2=y^2$.  Let $i$ be any index, and write

\[
x^i=ax^1+bx^2,\ y^i=a'y^1+b'y^2.
\]

We have

\[
x^1\cdot x^{i\perp}=x^1\cdot (ax^1+bx^2)^\perp=x^1\cdot (ax^{1\perp}+bx^{2\perp})=bx^1\cdot x^{2\perp}.
\]

Similarly, $y^1\cdot y^{i\perp}=b'y^1\cdot y^{2\perp}$.  By assumption, $x^1\cdot x^{i\perp}=y^1\cdot y^{i\perp}$, so it follows that $bx^1\cdot x^{2\perp}=b'y^1\cdot y^{2\perp}$.  Since $x^1\cdot x^{2\perp}=y^1\cdot y^{2\perp}\neq 0$, we conclude $b=b'$.  An argument considering $x^2\cdot x^{i\perp}$ similarly shows that $a=a'$.  So,

\[
gx^i=agx^1+bgx^2=ay^1+by^2=y^i.
\]

This proves existence.  Uniqueness follows from the fact that the configuration contains a basis, so $g$ is determined by its action on the configuration.  The converse follows from the matrix equation $g(x^i\ x^j)=(y^i\ y^j)$ and the fact that $g$ has determinant 1.
\end{proof}

\textbf{Remark}.  Clearly, the same argument can be made if any two indices $i,j$ have $\{x^i,x^j\}$ independent, and this will always happen unless all $x^i$ are on a common line through the origin.  However, either definition will produce a measure zero set of degenerate configurations, and it is convenient to assume that the first two points are always independent. \\

The lemma shows that we can identify the non-degenerate area types of $\mathcal{A}_k$ with the $(2k-1)$ flat in $\R^{2k+2}$ consisting of points where the first three coordinates are $(1,0,0,...)$.  Given $x\in(\R^2)^{k+1}$ non-degenerate, there is a unique $g\in\text{SL}_2(\R)$with $gx^1=(1,0)$ and $gx^2=(0,x^1\cdot x^{2\perp})$.  For this choice of $g$, let 

\[
A(x)=(gx^1,...,gx^{k+1})=(1,0,0,x^1\cdot x^{2\perp},gx^3,...,gx^{k+1}).
\]

Then $A(x)$ can be viewed as a point in the aforementioned $(2k-1)$-flat, or as a $(k+1)$ point configuration with the same area type as $x$.  It also follows that $A(x)=A(y)$ if and only if $x$ and $y$ have the same area type.  We also have the following approximate version of this fact.

\begin{lem}
\label{approx}
Let $x,y$ be $c$-non-degenerate $(k+1)$-point configurations satisfying $\|x^i\|,\|y^i\|\leq 1$ for each $i$.  Then, for $0<\e<1$, we have

\begin{enumerate}[i)]
	\item If $|x^i\cdot x^{j\perp}-y^i\cdot y^{j\perp}|<\e$ for all $i,j$, then $\|A(x)-A(y)\|<\frac{\sqrt{5k}}{c}\e$.
	\item If $\|A(x)-A(y)\|<\e$, then $|x^i\cdot x^{j\perp}-y^i\cdot y^{j\perp}|<6\e$ for all $i,j$.
\end{enumerate}
\end{lem}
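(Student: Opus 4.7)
The plan is to derive explicit coordinate formulas for $A(x)$ via linear algebra and then estimate each coordinate difference directly. Writing $g \in \text{SL}_2(\R)$ for the unique element with $gx^1 = e_1$ and $gx^2 = (0, \alpha)$, where $\alpha = x^1 \cdot x^{2\perp}$, I would decompose $x^i = a_i x^1 + b_i x^2$ for each $i \geq 3$; Cramer's rule yields $a_i = (x^i \cdot x^{2\perp})/\alpha$ and $b_i = (x^1 \cdot x^{i\perp})/\alpha$. Applying $g$ by linearity gives the key formula
\begin{equation*}
gx^i \;=\; \bigl((x^i \cdot x^{2\perp})/\alpha,\; x^1 \cdot x^{i\perp}\bigr),
\end{equation*}
so the $2k-1$ free coordinates of $A(x)$ are $\alpha$ together with this pair for each $i \geq 3$; the same formulas hold for $y$ with $\beta = y^1 \cdot y^{2\perp}$ in place of $\alpha$.

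For part (i), I would apply the hypothesis $|x^i \cdot x^{j\perp} - y^i \cdot y^{j\perp}| < \e$ coordinate by coordinate. The ``linear'' coordinates $\alpha - \beta$ and $x^1 \cdot x^{i\perp} - y^1 \cdot y^{i\perp}$ are immediately bounded by $\e$. For the ratio coordinates $(x^i \cdot x^{2\perp})/\alpha - (y^i \cdot y^{2\perp})/\beta$, I would use the standard quotient inequality
\begin{equation*}
\left|\frac{u}{\alpha} - \frac{v}{\beta}\right| \;\leq\; \frac{|u - v|}{|\alpha|} + \frac{|v|\,|\alpha - \beta|}{|\alpha \beta|},
\end{equation*}
combined with $|u|, |v| \leq 1$ from the unit-ball hypothesis on the configurations and $|\alpha|, |\beta| \geq c$ from $c$-non-degeneracy. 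Summing the squares of the $2k-1$ coordinate differences and taking a square root gives the claimed norm bound.

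For part (ii), the crucial observation is that $\text{SL}_2(\R)$ preserves the signed area form, so $x^i \cdot x^{j\perp} = A(x)^i \cdot A(x)^{j\perp}$ and likewise for $y$. Setting $\delta_i = A(x)^i - A(y)^i$, with $\|\delta_i\| \leq \|A(x) - A(y)\| < \e$ for all $i$ and $\delta_1 = 0$, bilinearity of the signed area form yields
\begin{equation*}
A(x)^i \cdot A(x)^{j\perp} - A(y)^i \cdot A(y)^{j\perp} \;=\; \delta_i \cdot A(y)^{j\perp} + A(y)^i \cdot \delta_j^\perp + \delta_i \cdot \delta_j^\perp.
\end{equation*}
Cauchy--Schwarz on each term reduces the estimate to bounding $\|A(y)^i\|$ and $\|A(y)^j\|$, which follow from the explicit formula ($= 1$ for $i = 1$, $\leq 1$ for $i = 2$, and $\leq \sqrt{1 + 1/c^2}$ for $i \geq 3$), after which the $\delta_i \cdot \delta_j^\perp$ term is absorbed using $\e < 1$.

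The main technical obstacle is keeping constants sharp: the normalized coordinates $A(x)^i$ for $i \geq 3$ are not bounded uniformly in $c$ (the first coordinate can be as large as $1/c$), which is what drives the $c$-dependence in the quotient estimate of part (i). This stretching is unavoidable because the $\text{SL}_2$ matrix $g$ aligning a near-degenerate pair $(x^1, x^2)$ to the standard basis is itself a near-degenerate matrix, so we must rely on $c$-non-degeneracy to control the Lipschitz constants of the change of variable $x \mapsto A(x)$ in both directions.
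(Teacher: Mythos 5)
Your proposal is correct in substance and follows essentially the same route as the paper: both write out the explicit coordinates of $A(x)=(1,0,0,\alpha,gx^3,\dots)$ with $gx^i=\bigl((x^i\cdot x^{2\perp})/\alpha,\;x^1\cdot x^{i\perp}\bigr)$ and estimate coordinate differences directly, using bilinearity of the signed area form for part (ii). The one point worth flagging is that your (correct) observation that the first coordinate of $gx^i$ is only bounded by $1/c$ means your argument actually yields $\sqrt{5k}\,\e/c^2$ in (i) and a $c$-dependent constant of the form $(2\sqrt{1+1/c^2}+1)\e$ in (ii), rather than the stated $\sqrt{5k}\,\e/c$ and $6\e$; the paper's own proof implicitly bounds these coordinates by $1$ at the corresponding steps, so its stated constants are slightly optimistic in exactly the same way, and the discrepancy is harmless since Lemma \ref{approx} is only invoked (in Theorem \ref{gsub}) up to constants depending on $c$ and $k$.
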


\begin{proof}
To prove the first part, suppose $|x^i\cdot x^{j\perp}-y^i\cdot y^{j\perp}|<\e$ for all $i,j$.  Let $A(x)=t=(1,0,0,t_1,...,t_{2k-1})$ and $A(y)=s=(1,0,0,s_1,...,s_{2k-1})$.  Then our assumption with $i=1$ means $t$ and $s$ satisfy

\[
|(1,0)\cdot (t_{2j},t_{2j+1})^\perp-(1,0)\cdot (s_{2j},s_{2j+1})^\perp|<\e
\]

for each $1\leq j\leq k-1$, which reduces to $|t_{2j+1}-s_{2j+1}|<\e$.  We also have $|t_1-s_1|<\e$.  Finally, we have

\[
|(t_{2j},t_{2j+1})\cdot (0,t_1)^\perp-(s_{2j},s_{2j+1})\cdot (0,s_1)^\perp|<\e,
\]

which reduces to $|t_{2j}t_1-s_{2j}s_1|<\e$, so

\begin{align*}
|t_{2j}-s_{2j}|&\leq \frac{1}{c} |(t_{2j}-s_{2j})t_1| \\
&\leq \frac{1}{c}|t_{2j}t_1-s_{2j}s_1|+\frac{1}{c}|s_{2j}(t_1-s_1)| \\
&\leq \frac{2\e}{c}.
\end{align*}

Putting this together, we have

\begin{align*}
\|A(x)-A(y)\|^2&=(t_1-s_1)^2+\sum_{j=1}^{k-1} (t_{2j}-s_{2j})^2+(t_{2j+1}-s_{2j+1})^2 \\
&\leq\e^2+\sum_{j=1}^{k-1}(\e^2+\frac{4\e^2}{c^2}) \\
&=k\e^2+\frac{4(k-1)\e^2}{c^2} \\
&\leq \frac{5k\e^2}{c^2},
\end{align*}

or $\|A(x)-A(y)\|\leq\frac{\sqrt{5k}}{c}\e$ as claimed. \\

For the second part, let $A(x)=t$ and $A(y)=s$ with the same notation as above.  If $\|A(x)-A(y)\|<\e$, then $|t_i-s_i|<\e$ for each $i$.  Write

\[
(s_{2i},s_{2i+1},s_{2j},s_{2j+1})=(t_{2i}+\delta_1,t_{2i+1}+\delta_2,t_{2j}+\delta_3,t_{2j+1}+\delta_4)
\]

with $\delta_1,\delta_2,\delta_3,\delta_4<\e$.  We have

\begin{align*}
&|(t_{2i},t_{2i+1})\cdot (t_{2j},t_{2j+1})^\perp-(s_{2i},s_{2i+1})\cdot (s_{2j},s_{2j+1})^\perp| \\
=&|t_{2i}\delta_4+t_{2j+1}\delta_1+\delta_1\delta_4-t_{2i+1}\delta_3-t_{2j}\delta_2-\delta_2\delta_3| \\
\leq & 6\e.
\end{align*}
\end{proof}

Our final tool is the following change of variables formula.

\begin{thm}
\label{gsub}
Let $x$ be a $c$-non-degenerate $(k+1)$-point configuration with $\|x^i\|\leq 1$ for all $i$.  Let $dy$ denote integration with respect to the $2k+2$ dimensional Lebesgue measure restricted to the domain $\|y^i\|\leq 1$, and let $dg$ denote integration with respect to a Haar measure on $\text{SL}_2(\R)$.  For any integrable $f$, we have

\begin{align*}
&\lim_{\e\to 0}\e^{-(2k-1)}\int_{|x^i\cdot x^{j\perp}-y^i\cdot y^{j\perp}|<\e}f(y)\:dy \\
\approx&\lim_{\e\to 0}\e^{-(2k-1)}\int_{|A(x)-A(y)|<\e}f(y)\:dy \\
\approx&\int f(gx)\:dg,
\end{align*}

where the implicit constants depend on $k$ and $c$, if the limit exists.  If the limit does not exist, the statement holds with the limit replaced by lim sup or lim inf. 
\end{thm}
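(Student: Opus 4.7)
The plan is to handle the two approximations separately. The equivalence between the integrals over $|x^i \cdot x^{j\perp} - y^i \cdot y^{j\perp}| < \e$ and $\|A(x) - A(y)\| < \e$ will follow from a direct sandwich via Lemma \ref{approx}: for $\e$ small enough (depending on $c$), any $y$ satisfying either condition is $c/2$-non-degenerate, and the two-sided bounds of the lemma give the inclusions
\[
\{y : |x^i \cdot x^{j\perp} - y^i \cdot y^{j\perp}| < \e \text{ for all } i,j\} \subset \{y : \|A(x) - A(y)\| < C_1 \e\}
\]
(with $C_1 = 2\sqrt{5k}/c$) and the reverse inclusion with constant $6$. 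After rescaling $\e$ and absorbing factors of $\e^{2k-1}$, these sandwich the two $\e$-normalized integrals together, and passing to $\limsup$, $\liminf$, or the limit when it exists yields equivalence up to constants in $k$ and $c$.

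The main work is the second approximation, which I would establish by a change of variables from $y \in (\R^2)^{k+1}$ to pairs $(g, z) \in \text{SL}_2(\R) \times F$, where $F$ is the $(2k-1)$-flat of canonical area-type representatives, via $\Phi(g, z) = (gz^1, \ldots, gz^{k+1})$. By Lemma \ref{action} this is a bijection from $\text{SL}_2(\R) \times \{z \in F : t_1(z) \neq 0\}$ onto non-degenerate configurations, with $A(\Phi(g,z)) = z$. The crucial step is the Jacobian computation: parametrizing $g \in \text{SL}_2(\R)$ by $(\alpha, \beta, \gamma)$ with $\delta = (1+\beta\gamma)/\alpha$, so that Haar measure becomes $dg = |\alpha|^{-1}\, d\alpha\, d\beta\, d\gamma$, the Jacobian matrix of $\Phi$ in these coordinates is block lower-triangular. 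The top $4 \times 4$ block, giving the dependence of $(y^1, y^2)$ on $(\alpha, \beta, \gamma, t_1)$, has determinant $\pm t_1/\alpha$ by direct expansion (using $\alpha\delta - \beta\gamma = 1$); each subsequent $2 \times 2$ block, giving the dependence of $y^i$ ($i \geq 3$) on the free coordinates of $z^i$, is just $g$ and contributes determinant $1$. Multiplying the blocks and absorbing $|\alpha|^{-1}$ into the Haar measure yields the clean identity
\[
dy = |t_1(z)|\, dg\, dz.
\]

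With this in hand, the second limit becomes
\[
\e^{-(2k-1)} \int_{\|A(x) - z\| < \e} |t_1(z)| \left( \int_{\text{SL}_2(\R)} f(gz) \, dg \right) dz.
\]
I would then apply the Lebesgue differentiation theorem to the integrable function $z \mapsto |t_1(z)| \int f(gz) \, dg$ (integrability follows from Fubini applied to the change of variables above). For a.e.\ $x$ this gives the limit $c_{2k-1} |t_1(A(x))| \int f(g A(x)) \, dg$ with $c_{2k-1}$ the volume of the unit ball in $\R^{2k-1}$, while for exceptional $x$ the $\limsup$ and $\liminf$ are controlled by the same quantity via the Hardy--Littlewood maximal inequality. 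Substituting $g \mapsto g g_x^{-1}$, where $A(x) = g_x x$, and using right-invariance of Haar measure converts $\int f(g A(x)) \, dg$ into $\int f(gx) \, dg$. Since $|t_1(A(x))| = |x^1 \cdot x^{2\perp}| \in [c,1]$ under the hypotheses, the prefactor $c_{2k-1} |t_1(A(x))|$ is bounded above and below by constants depending only on $k$ and $c$, giving the claim.

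The main obstacle is the Jacobian computation together with correctly normalizing against the Haar measure to arrive at the clean identity $dy = |t_1(z)|\, dg\, dz$; once that is in place, the remainder is a routine combination of Fubini, Lebesgue differentiation, and Haar invariance.
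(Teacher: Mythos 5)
Your first step (sandwiching the sets $\{|x^i\cdot x^{j\perp}-y^i\cdot y^{j\perp}|<\e\}$ and $\{\|A(x)-A(y)\|<\e\}$ inside constant dilates of each other via Lemma \ref{approx}) is exactly what the paper does. For the second, harder approximation your route is genuinely different: the paper introduces a third set $S_3(\e)=\{y:\exists g,\ \|y-gx\|<\e\}$, shows all three sets have measure $\approx\e^{2k-1}$ and are mutually contained in constant dilates of one another, and then simply asserts that the normalized average over the orbit tube $S_3(\e)$ converges to $\int f(gx)\,dg$. You instead make the last step explicit by the change of variables $y=\Phi(g,z)=(gz^1,\dots,gz^{k+1})$ with $z$ in the canonical flat, and your Jacobian computation checks out: in the chart $\delta=(1+\beta\gamma)/\alpha$ the $4\times 4$ block for $(y^1,y^2)$ has determinant $\pm t_1/\alpha$, the remaining blocks are copies of $g$ with determinant $1$, and absorbing $|\alpha|^{-1}$ into the Haar measure gives $dy=|t_1(z)|\,dg\,dz$; since $|t_1(A(x))|=|x^1\cdot x^{2\perp}|\in[c,1]$ the prefactor is harmless. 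This buys a cleaner and more quantitative justification of precisely the step the paper leaves implicit, at the cost of the Jacobian computation; the paper's set-comparison argument avoids any explicit parametrization of $\mathrm{SL}_2(\R)$.

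One soft spot: your appeal to the Lebesgue differentiation theorem only yields the stated limit at Lebesgue points of $z\mapsto|t_1(z)|\int f(gz)\,dg$ on the flat, i.e.\ for a.e.\ $x$ rather than for the arbitrary fixed $c$-non-degenerate $x$ in the hypothesis, and the Hardy--Littlewood maximal inequality does not repair this --- it bounds the maximal function in (weak) $L^1$, not the $\limsup$ at a given exceptional point by the value of the function there. For a general integrable $f$ and a specific bad $x$ the claimed two-sided comparison can genuinely fail, so some regularity of $f$ (or an a.e.\ qualifier) is needed to finish. The paper's own final step ``$I_{3,\e}(f)$ approximates $\int f(gx)\,dg$'' has exactly the same unstated limitation, and in the application $f$ is a product of Littlewood--Paley pieces, hence continuous, so the issue is cosmetic; but you should either assume continuity of $f$ at this point or state the conclusion for a.e.\ $x$ rather than invoking the maximal inequality.
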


\begin{proof}
Let $\D$ denote the unit disk in $\R^2$.  Define

\begin{align*}
S_1(\e)&=\{y\in\D^{k+1}:\forall i,j\ |x^i\cdot x^{j\perp}-y^i\cdot y^{j\perp}|<\e\} \\
S_2(\e)&=\{y\in\D^{k+1}:\|A(x)-A(y)\|<\e\} \\
S_3(\e)&=\{y\in\D^{k+1}:\exists g\in\text{SL}_2(\R)\|y-gx\|<\e\}
\end{align*}

where in the third definition we let $g$ act on $x$ pointwise and view the result as a vector in $\R^{2k+2}$ with the usual norm.  Define corresponding averaging operators

\[
I_{n,\e}(f)=\frac{1}{\mathcal{L}_{2k+2}(S_n(\e))}\int_{S_n(\e)}|f|
\]

for $n=1,2,3$.  Our first observation is that $\mathcal{L}_{2k+2}(S_n(\e))\approx \e^{2k-1}$ for each $n$.  Moreover, for each pair $n,m=1,2,3$ there is a constant $C_{n,m}$, possibly depending on $c$ and $k$, such that $S_n(\e)\subset S_m(C_{n,m}\e)$.  This has already been proved in the previous lemma for $n,m=1,2$.  Since there exist $g,h$ such that $gx=A(x)$ and $hy=A(y)$, we have

\[
\|h^{-1}gx-y\|=\|h^{-1}(A(x)-A(y))\|\leq \|h^{-1}\|\|A(x)-A(y)\|.
\]

Checking the construction of $h$ in the proof of lemma \ref{action}, we see $\|h^{-1}\|\leq \frac{2\sqrt{2}}{c}$.  This shows the existence of $C_{2,3}$.  To show the existence of $C_{3,1}$ consider $y\in S_3(\e)$.  It follows that $\|y^i-gx^i\|<\e$ for each $i$.  This means for each $i$, there is a $\delta^i=(\delta_1^i,\delta_2^i)$ with $y^i=gx^i+\delta^i$ and $\|\delta^i\|<\e$.  Using this, and the fact that $x^i\cdot x^{j\perp}=gx^i\cdot gx^{j\perp}$, we can take $C_{3,1}=6$.  This shows that all $C_{n,m}$ exist.  It follows that $I_{1,\e}(f)\approx I_{2,\e}(f)\approx I_{3,\e}(f)$.  Since $I_{3,\e}(f)$ approximates $\int f(gx)\:dg$ as $\e\to 0$, this completes the proof.
\end{proof}

\section{Proofs}

\subsection{Initial reductions and outline of proof}

We start by making a couple simple reductions that allow us to apply the results of the previous section.  In order to freely use Lemma \ref{gsub}, we need to ensure that the configurations we are integrating over satisfy $\|x^i\|,\|y^i\|\leq 1$ and $|x^i\cdot x^{j\perp}|>c$ for some constant $c$.  To do this, we claim it is sufficient to consider the case where $E\subset\R^2$ is a subset of the annulus with inner radius $1/2$ and outer radius 1, and can be decomposed into subsets $E_1,...,E_{k+1}$, each having positive measure with respect to a Frostman probability measure on $E$, which are "angle separated" in the sense that any $x_i\in E_i,x_j\in E_j$ with $i\neq j$ the angle between $x_i$ and $x_j$ is $\gtrsim 1$.  \\

Let $E\subset\R^2$ be compact with Hausdorff dimension greater than $2-\frac{1}{2k}$, and let $\mu$ be a Frostman probability measure with exponent $s>2-\frac{1}{2k}$ supported on $E$.  In particular, since $k\geq 1$ we have $s> \frac{3}{2}$.  We start by partitioning $E$ into dyadic annuli $A_j=\{x\in E:2^{j-1}\leq\|x\|\leq 2^{j}\}$.  We must have $\mu(A_j)>0$ for at least one $j$, and since the property of $\mathcal{A}_k(E)$ having positive measure is not affected by scaling we may assume without loss of generality that $j=0$.  We can replace $E$ with $A_0$ and rescale $\mu$ to obtain a new frostman probability measure, so without loss of generality we may assume $E$ is in the annulus we claimed.  \\

We further want to find angle separated subsets $E_1,...,E_{k+1}$ of positive measure.  We can do this by partitioning $[0,2\pi]$ into equal segments of length $\delta$ and considering the pieces of the annulus corresponding to this partition.  Considering alternating pieces, we can choose a subset of $E$ which has positive measure and where no two pieces of the annulus are consecutive.  Since $s>1$, if $\delta$ is small enough then each piece has measure $\leq\frac{1}{k+1}$, which means at least $k+1$ pieces must have positive measure.  We can let $E$ be the union of these $k+1$ pieces, again rescaling $\mu$ on each piece to obtain a probability measure where each of the $k+1$ pieces has measure $\frac{1}{k+1}$.  Our new set has the desired properties.  \\

To simplify notation, we will simply write integrals as $\int d\mu(x^i)$ or $\int d\mu^{k+1}(x)$.  In each case, the domain of integration is understood to be over $x_i\in E_i$. \\

The main idea of the proof is as follows.  We use $\mu$ to define a probability measure $\nu_k$ supported on $\mathcal{A}_k(E)$ and argue that it is absolutely continuous with respect to Lebesgue measure, which implies the result.  We reduce matters to bounding the integral

\[
\int\int d\mu^{k+1}(x)\:d\mu^{k+1}(y),
\]

where we are integrating over $x$ and $y$ such that $A(x)$ and $A(y)$ are (approximately) equal.  This allows us to apply the group action framework discussed in section 2, as well as Littlewood-Paley theory to write the above integral as a sum of integrals of the form

\[
\int\left(\prod_{j\in J} \int \mu_j(gx)\:d\mu(x)\right)dg,
\]

where $J$ is a finite set of $(k+1)$ indices.  We use fairly trivial bounds on $k-1$ of the factors, thus reducing to the $k=1$ case where we can use the mapping properties of generalized Radon transforms to obtain non-trivial estimates.  The result is a bound which is adequate when $s>2-\frac{1}{2k}$, hence the theorem.

\subsection{Proof of main theorem}

Let $A:(\R^2)^{k+1}\to \mathcal{A}_k$ be the map defined in section 2 and define a measure $\nu_k$ on $\mathcal{A}_k$ by

\[
\int_{\mathcal{A}_k} f(t)\:d\nu_k(t)=\int_{(\R^2)^{k+1}}f(A(x))\:d\mu^{k+1}(x).
\]

We want to prove $\mathcal{L}_{2k-1}(A_k(E))>0$.  Since $\nu_k$ is a probability measure supported on $\mathcal{A}_k(E)$, it suffices to prove that $\nu$ is absolutely continuous with respect to $\mathcal{L}_{2k-1}$.  Let $\f_\e$ be an approximation to the identity, and let $\nu_k^{\e}=\f_\e*\nu_k$.  We have

\[
\int_E \nu_k^{\e}(t)\:d\mathcal{L}_{2k-1}(t)\leq \mathcal{L}_{2k-1}(E)^{1/2}\|\nu_k^{\e}\|_{L^2},
\]

and the left hand side goes to $\nu(E)$ as $\e\to 0$.  So, a uniform bound on $\|\nu_k^{\e}\|_{L^2}$ implies $\nu$ is indeed absolutely continuous with respect to $\mathcal{L}_{2k-1}$.  Throughout, we abbreviate $d\mathcal{L}_{2k-1}(t)$ to simply $dt$.  We have

\begin{align*}
\nu_k^{\e}(t)&=\int\f_\e(t'-t)\:d\nu(t') \\
&=\int \f_\e(A(x)-t)d\mu^{k+1}(x) \\
&\approx \e^{-(2k-1)}\int_{\|A(x)-t\|<\e}d\mu^{k+1}(x) \\
\end{align*}

and therefore

\begin{align*}
\|\nu_k^{\e}\|_{L^2}^2 &\approx \e^{-2(2k-1)}\int\left(\int\cdots\int_{\substack{\|A(x)-t\|<\e \\ \|A(y)-t\|<\e}}d\mu^{k+1}(x)\:d\mu^{k+1}(y)\right)\:dt \\
&=\e^{-2(2k-1)}\int\cdots\int_{\|A(x)-A(y)\|<2\e}\left( \int_{\substack{\|A(x)-t\|<\e \\ \|A(x)-t\|<\e}} dt\right)d\mu^{k+1}(x)\:d\mu^{k+1}(y) \\
&\approx \e^{-(2k-1)}\int\cdots\int_{\|A(x)-A(y)\|<2\e}d\mu^{k+1}(x)\:d\mu^{k+1}(y).
\end{align*}

To estimate this integral, we decompose $d\mu^{k+1}(y)$ into Littlewood-Paley pieces.  Let $\psi$ be a Schwarz function supported in the range $\frac{1}{2}\leq |\x|\leq 4$ and constantly equal to 1 in the range $1\leq |\x|\leq 2$.  Define the $j$-th Littlewood-Paley piece of $\mu$ by $\widehat{\mu_j}(\x)=\psi(2^{-j}\x)\widehat{\mu}(\x)$.  The following bounds will be useful.

\begin{lem}
\label{bounds}
Let $\mu_j$ be as defined above.  Then,

\[
\|\mu_j\|_{L^\infty}\lesssim 2^{j(2-s)}
\]

and

\[
\|\mu_j\|_{L^2}\lesssim 2^{j(2-s)/2}.
\]
\end{lem}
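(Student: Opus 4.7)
The lemma records two standard Frostman-type bounds on the Littlewood--Paley piece $\mu_j$.  Write $\mu_j = \mu*K_j$ where $K_j$ is the inverse Fourier transform of $\psi(2^{-j}\,\cdot\,)$, so $K_j(y) = 2^{2j}\check\psi(2^j y)$.  Since $\psi$ is Schwartz, for every $N\in\N$ we have
\[
|K_j(y)| \lesssim \frac{2^{2j}}{(1+2^j|y|)^N},
\]
with implicit constant depending on $N$.  In particular $\|K_j\|_{L^1} \lesssim 1$ and $K_j$ behaves as an approximate identity at scale $2^{-j}$.

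For the $L^\infty$ bound I would estimate $|\mu_j(x)| \leq \int |K_j(x-y)|\,d\mu(y)$ by a dyadic decomposition in $|x-y|$.  On $|x-y|\leq 2^{-j}$, the Frostman condition gives $\mu$-mass $\lesssim 2^{-js}$ against a kernel of size $\lesssim 2^{2j}$, contributing $\lesssim 2^{j(2-s)}$.  On the shell $|x-y|\sim 2^{k-j}$ with $k\geq 1$, the kernel is $\lesssim 2^{2j-kN}$ while Frostman gives mass $\lesssim 2^{(k-j)s}$, producing a contribution $\lesssim 2^{j(2-s)} 2^{k(s-N)}$.  Choosing $N>s$ and summing in $k\geq 0$ yields the uniform estimate $\|\mu_j\|_{L^\infty} \lesssim 2^{j(2-s)}$.

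For the $L^2$ bound the cleanest route is interpolation.  Since $\mu$ is a probability measure, Young's convolution inequality gives $\|\mu_j\|_{L^1} \leq \|K_j\|_{L^1} \lesssim 1$.  Combined with the previous bound via Cauchy--Schwarz,
\[
\|\mu_j\|_{L^2}^2 \leq \|\mu_j\|_{L^1}\|\mu_j\|_{L^\infty} \lesssim 2^{j(2-s)},
\]
which gives $\|\mu_j\|_{L^2} \lesssim 2^{j(2-s)/2}$.  Alternatively, Plancherel reduces the problem to the bound $\int_{|\x|\sim 2^j}|\widehat\mu(\x)|^2\, d\x \lesssim 2^{j(2-s)}$, which follows from the Frostman condition by essentially the same dyadic computation applied to the correlation measure $\mu*\widetilde\mu$.

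The argument is essentially textbook; the only mild care needed is handling the rapid (but not compactly supported) decay of $K_j$ by summing the dyadic tail in the $L^\infty$ step.  There is no substantive obstacle here, which is why these bounds appear as a preparatory lemma: they are the inputs fed into the Littlewood--Paley decomposition that drives the main theorem.
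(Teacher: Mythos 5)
Your proposal is correct. The $L^\infty$ bound is proved exactly as in the paper: write $\mu_j=K_j*\mu$ with $K_j(y)=2^{2j}\check\psi(2^jy)$, use the rapid decay of $\check\psi$, decompose dyadically in $|x-y|$, and apply the Frostman condition on each shell; the paper takes the specific decay exponent $N=2$ (which suffices since $s<2$), while you keep a general $N>s$, but the computation is the same.

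For the $L^2$ bound you take a genuinely different route. The paper goes through Plancherel: $\|\mu_j\|_{L^2}^2=\|\widehat{\mu_j}\|_{L^2}^2\lesssim\int_{\|\x\|\leq 2^{j+2}}|\widehat\mu(\x)|^2\,d\x\lesssim 2^{j(2-s)}$, where the last step is the Fourier representation of the energy integral (cited to Wolff's notes). You instead use $\|\mu_j\|_{L^2}^2\leq\|\mu_j\|_{L^1}\|\mu_j\|_{L^\infty}$ together with $\|\mu_j\|_{L^1}\leq\|K_j\|_{L^1}\lesssim 1$ and the $L^\infty$ bound just established. Both give the same exponent. Your argument is more elementary and self-contained (no appeal to the energy-integral identity), at the cost of making the $L^2$ estimate logically dependent on the $L^\infty$ one; the paper's two bounds are proved independently. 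Since you also note the Plancherel alternative, there is no gap either way.
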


We defer the proof to the next subsection to avoid distracting from the proof of the main theorem.  Using the Littlewood-Paley decomposition, we have

\[
\|\nu_k^{\e}\|_{L^2}^2 \approx \e^{-(2k-1)}\int\cdots\int_{\|A(x)-A(y)\|<2\e}\sum_{j_1>\cdots>j_{k+1}>0}\mu_{j_1}(y^1)\cdots \mu_{j_{k+1}}(y^{k+1})d\mu^{k+1}(x)\:dy.
\]

Here we have made a couple simple reductions.  First, restricting the sum to one where the indices are ordered only affects the value by a multiplicative constant, so we may impose a convenient order.  Second, we do not need to consider negative indices because summing over such indices clearly gives a bounded result.  This gives the expression above.  By Theorem \ref{gsub}, as $\e\to 0$ this has limit

\[
\approx \int\int\cdots\int \sum_{j_1>\cdots>j_{k+1}>0}\mu_{j_1}(gx^1)\cdots \mu_{j_{k+1}}(gx^{k+1})d\mu^{k+1}(x)\:dg. 
\]

By writing the integral in terms of the group action, we can separate each of the variables $x^i$ and bound this as a product of $k+1$ integrals, each of the form $\int \mu_j(gx)\:d\mu(x)$.  We bound $k-1$ of these factors using the $L^\infty$ bound in Lemma \ref{bounds}, and the fact that $\mu$ is a probability measure.  This shows that the above integral is

\[
\lesssim \sum_{j_1>\cdots>j_{k+1}}2^{j_3(2-s)}\cdots 2^{j_{k+1}(2-s)}\int\int\int \mu_{j_1}(gx^1)\mu_{j_2}(gx^2)\:d\mu(x^1)\:d\mu(x^2)\:dg.
\]

Running the sum in $j_3$ through $j_{k+1}$, we can reduce matters to the $k=1$ case as follows.

\begin{align*}
\|\nu_k^{\e}\|_{L^2}^2\lesssim &\sum_{j_1>j_2}2^{j_2(2-s)(k-1)}\int\int\int \mu_{j_1}(gx^1)\mu_{j_2}(gx^2)\:d\mu(x^1)\:d\mu(x^2)\:dg  \\
\approx &\e^{-1}\sum_{j_1>j_2}2^{j_2(2-s)(k-1)}\int\int\int\int_{|x^1\cdot x^{2\perp}-y^1\cdot y^{2\perp}|<\e} \mu_{j_1}(y^1)\mu_{j_2}(y^2)\:d\mu(x^1)\:d\mu(x^2)\:dy^1\:dy^2 \\
\approx &\e^{-2}\sum_{j_1>j_2}2^{j_2(2-s)(k-1)}\int\int\int\int\int_{\substack{|x^1\cdot x^{2\perp}-t|<\e \\ |y^1\cdot y^{2\perp}-t|<\e}}\mu_{j_1}(y^1)\mu_{j_2}(y^2)\:d\mu(x^1)\:d\mu(x^2)\:dy^1\:dy^2\:dt \\
=&\sum_{j_1>j_2}2^{j_2(2-s)(k-1)}\int\left(\e^{-1}\int\int_{|x^1\cdot x^{2\perp}-t|<\e}d\mu(x^1)\:d\mu(x^2)\right)\left(\e^{-1}\int\int_{|y^1\cdot y^{2\perp}-t|<\e}\mu_{j_1}(y^1)\mu_{j_2}(y^2)\:dy^1\:dy^2\right)\:dt \\
\approx & \sum_{j_1>j_2}2^{j_2(2-s)(k-1)}\int \nu_1^\e(t)\left\langle \mu_{j_1},\mathcal{R}_t^\e\mu_{j_2}\right\rangle\:dt 
\end{align*}

Here, $\left\langle \cdot,\cdot\right\rangle$ denotes the $L^2(\R^2)$ inner product and $\mathcal{R}_t^\e$ is the $\e$-approximation to the generalized Radon transform given by

\[
\mathcal{R}_tf(x)=\int_{x\cdot y^\perp=t} f(y)\eta(x,y)\:d\sigma_{x,t}(y),
\]

where $\sigma_{x,t}$ denotes the one dimensional Lebesgue measure on the line $\{y:x\cdot y^\perp=t\}$ and $\eta$ is a smooth cutoff function supported on the region $\frac{1}{2}\leq \|x\|,\|y\|\leq 1$.  This operator satisfies the following bounds.

\begin{lem}
\label{GRT}
Let $\mathcal{R}_t^\e$ and $\mu_j$ be as above.  Then

\[
\|\mathcal{R}_t^\e\mu_j\|_{L^2}\lesssim 2^{-j/2}2^{j(2-s)/2}.
\]

Also, if $|j-l|>5$ then

\[
\left\langle \mu_j,\mathcal{R}_t^\e\mu_l\right\rangle \lesssim 2^{-98\max(j,l)}
\]

\end{lem}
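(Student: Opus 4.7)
The plan for the $L^2$ bound is to invoke the smoothing property of the generalized Radon transform $\mathcal{R}_t$. The defining phase $\Phi(x,y)=x\cdot y^\perp-t$ has rotational curvature equal (up to sign) to $t$ on the incidence relation $\{\Phi=0\}$, and since the $\eta$ cutoff restricts to $\|x\|,\|y\|\sim 1$ while the $c$-non-degeneracy forces $|t|\gtrsim c$ on the relevant range, this curvature is bounded away from zero. By the classical Phong--Stein theory of Fourier integral operators, $\mathcal{R}_t$ is then an FIO of order $-1/2$, yielding $\|\mathcal{R}_t f\|_{L^2}\lesssim\|f\|_{H^{-1/2}}$. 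Applying this to $\mu_j$, whose Fourier transform is supported in the annulus $|\x|\sim 2^j$, gives $\|\mathcal{R}_t\mu_j\|_{L^2}\lesssim 2^{-j/2}\|\mu_j\|_{L^2}$, and combining with the $L^2$ bound $\|\mu_j\|_{L^2}\lesssim 2^{j(2-s)/2}$ from Lemma \ref{bounds} produces the claimed inequality. The $\e$-approximation $\mathcal{R}_t^\e$ replaces the singular $\delta$-kernel by a bump of width $\e$, which only adds further smoothing, so the bound holds uniformly in $\e$.

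For the near-orthogonality estimate, the key observation is that the Radon transform essentially preserves Fourier localization. Writing
\[
\mathcal{R}_t^\e f(x)=\int\int\widehat{\rho}(\e\tau)e^{i\tau(x\cdot y^\perp-t)}\eta(x,y)f(y)\:dy\:d\tau
\]
with $\widehat{\rho}$ Schwartz, the inner $y$-integral essentially produces $\widehat{\eta(x,\cdot)f}(-\tau x^\perp)$, so for $f=\mu_l$ the dominant contribution comes from $\tau\sim 2^l$.  Hence $\mathcal{R}_t^\e\mu_l$ has Fourier content concentrated at scale $|\x|\sim 2^l$, up to rapidly decaying tails from the smooth cutoff $\eta$.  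Pairing with $\mu_j$, whose Fourier support lies in $|\x|\sim 2^j$, and noting that these two supports are separated when $|j-l|>5$, gives the desired rapid decay.

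To make this quantitative, I would Fourier-expand both Littlewood--Paley cutoffs $\psi(2^{-j}\x)$ and $\psi(2^{-l}\x)$ and repeatedly integrate by parts in $\tau$ against the oscillatory phase $\tau(x\cdot y^\perp-t)$, using the Schwartz decay of $\widehat{\rho}$ along with the smoothness of $\eta$ and $\psi$.  Each integration by parts gains a factor of $2^{-\max(j,l)}$ coming from the dyadic frequency separation, and after sufficiently many iterations one obtains the stated exponent.  The main obstacle is the bookkeeping: ensuring this decay is genuinely uniform in $\e$ and in $x$ (which varies over the cutoff region of $\eta$), and that convolution with $\widehat{\eta(x,\cdot)}$ does not leak frequency mass across dyadic scales in a way that spoils the $2^{-98\max(j,l)}$ bound.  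This reduces to careful tracking of Schwartz tails together with the uniform lower bound $|t|\gtrsim c$.
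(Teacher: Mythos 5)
Your treatment of the first bound is essentially the paper's: both arguments rest on the $1/2$-smoothing of $\mathcal{R}_t$ coming from nonvanishing rotational curvature (the paper cites the $L^2\to L^2_{1/2}$ bound from Stein--Shakarchi and combines it with the frequency localization of $\mu_j$; you phrase the same gain as $H^{-1/2}\to L^2$ applied to a function with Fourier support in $|\x|\sim 2^j$). Your explicit computation that the rotational curvature is $\pm t$, hence bounded below by $c$ on the relevant range, is a point the paper leaves implicit, and is worth making. Your heuristic for the second bound is also the paper's: $\widehat{\mathcal{R}_t^\e\mu_l}$ is concentrated on $|\zeta|\sim 2^l$ up to rapidly decaying tails, so pairing against $\mu_j$ with $|j-l|>5$ gives near-orthogonality by Plancherel.

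The one step that would fail as written is your plan to ``repeatedly integrate by parts in $\tau$ against the oscillatory phase $\tau(x\cdot y^\perp-t)$.'' Differentiating that phase in $\tau$ produces the factor $x\cdot y^\perp-t$, so integration by parts in $\tau$ gains powers of $|x\cdot y^\perp-t|^{-1}$ (and costs powers of $\e$ from $\widehat{\f}(\e\tau)$); it produces no decay in $\max(j,l)$ whatsoever, since nothing forces $x\cdot y^\perp-t$ to be large. The decay has to come from nonstationary phase in the \emph{spatial} variables: after Fourier-expanding $\mu_j$ and $\mu_l$, the full phase is $\Phi_{\zeta,\x,\tau}(x,y)=-\zeta\cdot x+\x\cdot y+\tau(x\cdot y^\perp-t)$ with $\nabla_{x,y}\Phi=(-\zeta+\tau y^\perp,\ \x-\tau x^\perp)$, and a critical point on the support of $\eta$ forces $|\tau|\sim|\x|\sim 2^l$ and simultaneously $|\tau|\sim|\zeta|\sim 2^j$, which is impossible when $|j-l|>5$; so for every $\tau$ the gradient has size $\gtrsim 2^{\max(j,l)}$ and integration by parts in $(x,y)$ gives $\lesssim_N 2^{-N\max(j,l)}$. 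This is exactly how the paper quantifies the frequency-separation heuristic you correctly identified, and it also supplies the paper's bound on the off-diagonal region $B$ in the first estimate (a contribution your $H^{-1/2}$ shortcut absorbs into the FIO theorem, so you do not need it separately there).
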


Again, we defer the proof to the next subsection.  Let

\[
S=\sum_{\substack{j_1,j_2 \\ 0<j_2\leq j_1}}2^{j_2(2-s)(k-1)}\sup_t \left\langle \mu_{j_1},\mathcal{R}_t^\e\mu_{j_2}\right\rangle.
\]

where $C$ is an appropriate constant.  We have proved

\[
\|\nu_k^\e\|_{L^2}^2\lesssim S\int\nu_1^\e(t)\:dt\lesssim S\|\nu_1^\e\|_{L^2}.
\]

Suppose $S$ is finite.  Then the $k=1$ case of the above inequality proves that $\|\nu_1^\e\|_{L^2}$ is bounded independent of $\e$, and this in turn implies the same for all $k>1$.  So, it suffices to prove $S$ is finite.  We break $S$ into two parts.  Let $S_1$ be the sum over indices $j_1-j_2\leq 5$ and $S_2$ the sum over indices $j_1>j_2+5$.  By the second part of Lemma \ref{GRT}, $S_2$ clearly converges.  We also have

\begin{align*}
S_1&\approx \sum_j 2^{j(2-s)(k-1)}\sup_t \left\langle \mu_{j},\mathcal{R}_t^\e\mu_{j}\right\rangle \\
&\lesssim \sum_j 2^{j(2-s)(k-1)}\|\mu_j\|_{L^2}\|\mathcal{R}_t^\e\mu_{j}\|_{L^2} \\
&\lesssim \sum_j 2^{j(2-s)(k-1)-\frac{j}{2}}\|\mu_j\|_{L^2}^2 \\
&\lesssim \sum_j 2^{j(2-s)(k-1)-\frac{j}{2}+j(2-s)},
\end{align*}

using Cauchy-Schwarz and Lemmas \ref{bounds} and \ref{GRT}.  If $s>2-\frac{1}{2k}$ then this sum converges as well.  

\subsection{Proofs of Lemmas \ref{bounds} and \ref{GRT}}

To prove the theorem, all that remains is to prove Lemmas \ref{bounds} and \ref{GRT} which were introduced in the previous section.  We restate and prove them here.

\begin{replem}{bounds}
Let $\mu_j$ be as defined above.  Then,

\[
\|\mu_j\|_{L^\infty}\lesssim 2^{j(2-s)}
\]

and

\[
\|\mu_j\|_{L^2}\lesssim 2^{j(2-s)/2}.
\]
\end{replem}

\begin{proof}
To prove the $L^\infty$ bound, we first observe $\mu_j(x)=2^{2j}\check{\psi}(2^j\cdot)*\mu(x)$.  Since $\psi$ is a Schwarz function, it satisfies $\psi(x)\lesssim (1+\|x\|)^{-2}$.  Therefore,

\[
|\mu_j(x)|\lesssim 2^{2j}\int (1+2^j\|x-y\|)^{-2}\:d\mu(y)
\]

We separate the integral into the range where $2^j\|x-y\|<1$ and $2^j\|x-y\|>1$.  In the first range, we have

\begin{align*}
&2^{2j}\int_{2^j\|x-y\|<1} (1+2^j\|x-y\|)^{-2}\:d\mu(y)  \\
\lesssim &2^{2j} \mu(\{y:2^j\|x-y\|<1\}) \\
\lesssim &2^{j(2-s)}
\end{align*}

In the second, we have

\begin{align*}
&2^{2j}\int_{2^j\|x-y\|>1} (1+2^j\|x-y\|)^{-2}\:d\mu(y)  \\
=&2^{2j}\sum_{m=0}^\infty\int_{2^m\leq 2^j\|x-y\|\leq 2^{m+1}} (1+2^j\|x-y\|)^{-2}\:d\mu(y)  \\
\lesssim & 2^{2j}\sum_{m=0}^\infty 2^{2m}\mu(\{y:2^m\leq 2^j\|x-y\|\leq 2^{m+1}\}) \\
\lesssim & 2^{j(2-s)}\sum_{m=0}^\infty 2^{m(s-2)}\\
\lesssim & 2^{j(2-s)}
\end{align*}

as claimed.  To prove the $L^2$ bound, we have

\[
\|\mu_j\|_{L^2}^2=\|\widehat{\mu_j}\|_{L^2}^2\lesssim \int_{\|\x\|\leq 2^{j+2}}|\widehat{\mu}(\x)|^2\:d\x\lesssim 2^{j(2-s)}.
\]

The last inequality comes from the Fourier representation of the energy integral of $\mu$; see \cite{WolffNotes}, proposition 8.5.
\end{proof}

\begin{replem}{GRT}

Let $\mathcal{R}_t^\e$ and $\mu_j$ be as above.  Then

\[
\|\mathcal{R}_t^\e\mu_j\|_{L^2}\lesssim 2^{-j/2}2^{j(2-s)/2}.
\]

Also, if $|j-l|>5$ then

\[
\left\langle \mu_j,\mathcal{R}_t^\e\mu_l\right\rangle \lesssim 2^{-98\max(j,l)}
\]

\end{replem}

\begin{proof}
The lemma is proved by a computation similar to those in \cite{EIT} and \cite{Incidence}; the idea is to show that $\widehat{\mathcal{R}_t^\e}$ decays rapidly outside the support of $\widehat{\mu_j}$ and use Plancherel.  Recall that $\f_\e$ is our approximation to the identity, so we have

\[
\mathcal{R}_t^\e\mu_j(x)=\int \mu_j(y)\eta(x,y)\f_\e(x\cdot y^\perp-t)\:dy.
\]

Using Fourier inversion on both $\mu_j$ and $\f_\e$, this is

\[
\mathcal{R}_t^\e\mu_j(x)=\int\int\int e^{2\pi i\x\cdot y}e^{2\pi i \tau(x\cdot y^\perp-t)}\widehat{\mu_j}(\x)\widehat{\f}(\e\tau)\eta(x,y)\:dy\:d\tau\:d\x,
\]

and therefore

\begin{align*}
\widehat{\mathcal{R}_t^\e\mu_j}(\zeta)&=\int\int\int\int e^{-2\pi i \zeta\cdot x}e^{2\pi i\x\cdot y}e^{2\pi i \tau(x\cdot y^\perp-t)}\widehat{\mu_j}(\x)\widehat{\f}(\e\tau)\eta(x,y)\:dx\:dy\:d\tau\:d\x. \\
&= \int\int \widehat{\mu_j}(\x)\widehat{\f}(\e\tau) I(\zeta,\x,\tau)\:d\tau\:d\x,
\end{align*}

where

\[
I(\zeta,\x,\tau)=\int\int e^{-2\pi i \zeta\cdot x}e^{2\pi i\x\cdot y}e^{2\pi i \tau(x\cdot y^\perp-t)}\eta(x,y)\:dx\:dy.
\]

We claim that $I(\zeta,\x,\tau)$ is negligible when $\zeta$ and $\x$ are in sufficiently separated dyadic annuli, as it is an oscillatory integral with phase function

\[
\Phi_{\zeta,\x,\tau}(x,y)=-\zeta\cdot x+\x\cdot y+\tau(x\cdot y^\perp-t)
\]

and we have

\[
\nabla \Phi_{\zeta,\x,\tau}(x,y)= (-\zeta+\tau y^\perp,\x-\tau x^\perp).
\]

Thus, critical points of $\Phi$ satisfy $\tau(x,y)=(\x^\perp,\zeta^\perp)$.  Suppose $2^{j-2}\leq |\zeta|\leq 2^{j+2}$ and $2^{l-2}\leq |\zeta|\leq 2^{l+2}$; without loss of generality, assume $l\leq j$.  Recall $\eta(x,y)$ is supported in the region $\frac{1}{2}\leq \|x\|,\|y\|\leq 1$; if that region contains a critical point, then we must have $|\tau|\leq 2^{l+3}$ and $|\tau|\geq 2^{j-2}$.  If $j-l>5$ this is not possible, so by nonstationary phase (for example \cite{WolffNotes}, proposition 6.1) we have $I(\zeta,\x,\tau)\lesssim_N 2^{-Nj}$ for any N when $\zeta,\x$ are in the range given above.  Without the assumption $j>l$, this becomes $I(\zeta,\x,\tau)\lesssim_N 2^{-N\max(j,l)}$ whenever $|j-l|>5$.  It follows that if $2^{l-2}\leq \|\zeta\|\leq 2^{l-2}$ with $|j-l|>5$, we have

\[
|\widehat{\mathcal{R}_t^\e\mu_j}(\zeta)|\lesssim_N \int\int|\widehat{\mu_j}(\x)||\widehat{\f}(\e\tau)|2^{-Nj}\:d\x\:d\tau
\]

Using the fact $|\mu_j|\leq 1$ with support of measure $\lesssim 2^{2j}$ and observing $\f$ is Schwarz, this gives 

\[
|\widehat{\mathcal{R}_t^\e\mu_j}(\zeta)|\lesssim 2^{-100\max(j,l)} \ \ \ \text{when}\ \ \  |j-l|>5
\]

With these preliminary computations done we are ready to prove the bounds we claimed in the lemma.  For the first, we have

\begin{align*}
\|\mathcal{R}_t^\e\mu_j\|_{L^2}^2 &=\|\widehat{\mathcal{R}_t^\e\mu_j}\|_{L^2}^2 \\
&=\int|\widehat{\mathcal{R}_t^\e\mu_j}(\x)|^2\:d\x \\
&=A+B,
\end{align*}

where $A$ is the integral over the range $2^{j-10}\leq \|\x\|\leq 2^{j+10}$ and $B$ is the remaining integral.  We have

\begin{align*}
A&=\int_{2^{j-3}\leq \|\x\|\leq 2^{j+3}}|\widehat{\mathcal{R}_t^\e\mu_j}(\x)|^2\:d\x \\
&\approx 2^{-j}\int_{2^{j-3}\leq \|\x\|\leq 2^{j+3}}\|\x\||\widehat{\mathcal{R}_t^\e\mu_j}(\x)|^2\:d\x \\
&\lesssim 2^{-j}\|\mathcal{R}_t^\e\mu_j\|_{L_{1/2}^2}^2
\end{align*}

The map $\mathcal{R}_t$ is a bounded linear operator $L^2\to L_{1/2}^2$ (see for example \cite{SS}, Chapter 8, Theorem 7.1).  It follows that $A\lesssim 2^j\|\mu_j\|_{L^2}^2$.  This gives the claimed bound in view of Lemma \ref{bounds}.  To bound $B$, we have

\[
B\approx \sum_{\substack{l \\ |j-l|>5}}\int_{2^{l-2}\leq \|\x\|\leq 2^{l+2}} |\mathcal{R}_t^\e\mu_j(\x)|^2\:d\x.
\]

We split further into two terms according to whether $l<j-5$ or $l>j+5$.  For the first, we have

\begin{align*}
&\sum_{\substack{l \\ l<j-5}}\int_{2^{l-2}\leq \|\x\|\leq 2^{l+2}} |\mathcal{R}_t^\e\mu_j(\x)|^2\:d\x \\
\lesssim & \sum_{\substack{l \\ l<j-5}}2^{2j}2^{-100j} \\
\lesssim &2^{-50j}.
\end{align*}

For the second,

\begin{align*}
&\sum_{\substack{l \\ l>j+5}}\int_{2^{l-2}\leq \|\x\|\leq 2^{l+2}} |\mathcal{R}_t^\e\mu_j(\x)|^2\:d\x \\
\lesssim & \sum_{\substack{l \\ l>j+5}}2^{2l}2^{-100l} \\
\lesssim &2^{-50j}
\end{align*}

This is clearly enough to have $B\lesssim 2^{-j}2^{j(s-2)}$ as claimed.  For the second bound in the lemma, suppose $|j-l|>5$.  We have

\begin{align*}
|\left\langle \mu_j,\mathcal{R}_t^\e\mu_l\right\rangle| &=|\left\langle \widehat{\mu_j},\widehat{\mathcal{R}_t^\e\mu_l}\right\rangle| \\
&=\left|\int \widehat{\mu_j}(\x)\widehat{\mathcal{R}_t^\e\mu_l}(\x)\:d\x\right| \\
&\lesssim 2^{-100\max(j,l)}\int |\widehat{\mu_j}(\x)|\:d\x \\
&\lesssim 2^{-100\max(j,l)}2^{2j} \\
&\leq 2^{-100\max(j,l)}2^{2\max(j,l)} \\
&=2^{-98\max(j,l)}.
\end{align*}

\end{proof}

This completes the proofs of both lemmas in section 3.2, and hence completes the proof of the main theorem.

\section{Proof of Theorem \ref{Sharpness}}

We conclude by proving Theorem \ref{Sharpness}.  Let $\Lambda_{q,s}$ be the $q^{-2/s}$-neighborhood of $\Z^2\cap\frac{1}{q}([q/2,q]\times [0,q])$, the right half of the lattice in the unit square with spacing $\frac{1}{q}$.  By Theorem 8.15 in \cite{Fractals}, $\cap_n \Lambda_{q_n,s}$ has dimension $s$ if $q_n$ is increasing sufficiently rapidly.  So, for large $q$, $\Lambda_{q,s}$ is an approximation to a set of dimension $s$.  We want to modify this example to fit our problem.  We use the following lemma.

\begin{lem}[\cite{Fractals}, Lemma 1.8]
Let $\psi:E\to F$ be surjective and Lipschitz, and let $H^s$ be the $s$-dimensional Hausdorff measure.  Then $H^s(F)\lesssim H^s(E)$.

\end{lem}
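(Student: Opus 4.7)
The plan is to reduce everything to the defining $\delta$-cover infimum of Hausdorff measure and exploit the fact that a Lipschitz map multiplies diameters by at most a constant. Let $L$ denote a Lipschitz constant for $\psi$, so that $\operatorname{diam}(\psi(A))\leq L\operatorname{diam}(A)$ for every set $A\subset E$. Recall that
\[
H^s(X)=\lim_{\delta\to 0^+}H^s_\delta(X),\qquad H^s_\delta(X)=\inf\Bigl\{\sum_i(\operatorname{diam} U_i)^s:X\subset\bigcup_i U_i,\ \operatorname{diam} U_i\leq\delta\Bigr\}.
\]

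First I would fix $\delta>0$ and let $\{U_i\}$ be any cover of $E$ by sets of diameter at most $\delta$. Since $\psi$ is surjective onto $F$, the family $\{\psi(U_i\cap E)\}$ is a cover of $F$, and each member has diameter at most $L\delta$. Therefore
\[
H^s_{L\delta}(F)\leq \sum_i(\operatorname{diam}\psi(U_i\cap E))^s\leq L^s\sum_i(\operatorname{diam} U_i)^s.
\]

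Next I would take the infimum over all admissible $\delta$-covers of $E$ to conclude $H^s_{L\delta}(F)\leq L^s H^s_\delta(E)$, and then let $\delta\to 0^+$. Since $H^s_\delta(E)$ increases to $H^s(E)$ and $H^s_{L\delta}(F)$ increases to $H^s(F)$ as $\delta\to 0$, we obtain $H^s(F)\leq L^s H^s(E)$, which is exactly the claim $H^s(F)\lesssim H^s(E)$ with implicit constant $L^s$.

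There is no serious obstacle: the entire argument is just bookkeeping built on the inequality $\operatorname{diam}(\psi(A))\leq L\operatorname{diam}(A)$. The only mildly delicate point is ensuring that the covers produced in $F$ actually satisfy the diameter constraint required to control $H^s_{L\delta}$ (as opposed to $H^s_\delta$), which is handled by rescaling the threshold $\delta$ by $L$ before passing to the limit. Once that is in place, monotone convergence of $H^s_\delta$ to $H^s$ closes the argument.
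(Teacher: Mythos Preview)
Your proof is correct and is exactly the standard argument: push a $\delta$-cover of $E$ forward by $\psi$, use $\operatorname{diam}\psi(A)\leq L\operatorname{diam}(A)$ to get an $L\delta$-cover of $F$, and pass to the limit. The paper does not actually give its own proof of this lemma; it simply quotes it from Falconer's book \cite{Fractals}, where the proof is the same cover-and-push-forward argument you wrote.
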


In particular, under the assumptions of the lemma, $\dim F\leq \dim E$.  If $\phi$ is bijective and Lipschitz in both directions, then $\dim F=\dim E$.  Let $E_{q,s}$ be the image of $\Lambda_{q,s}$ under the map

\[
\psi(x,y)=xe^{i\pi y/2}.
\]

It is clear that this map is injective on $[1/2,1]\times[0,1]$ and therefore bijective as a map $\Lambda_{q,s}\to E_{q,s}$.  Fix a rapidly increasing sequence $q_n$ and let $\Lambda_s=\cap_n \Lambda_{q_n,s},E_s=\cap_n E_{q_n,s}$.  By the lemma, $\dim E_s=s$.  It remains to prove $\mathcal{L}^{2k-1}(A_k(E_s))=0$. \\

We begin by counting the number of area types determined by the image of $\Z^2\cap([q/2,q]\times [0,q])$ under $\psi$, i.e. the set

\[
\{r(\cos \frac{\pi k}{2q},\sin\frac{\pi k}{2q}):r\in\Z\cap[q/2,q],k\in\Z\cap[0,q]\}.
\]

Let $\mathcal{C}$ be a complete set of representatives of the above set under the equivalence relation given by having the same area type.  Let $T$ be the operation defined on $\mathcal{C}$ where $Tx$ is obtained from $x$ by rotating clockwise until one $x^i$ is on the $x$-axis, and all others are still in the first quadrant.  It is clear that $T$ is injective on $\mathcal{C}$, so it is enough to bound $T(\mathcal{C})$.  Since every element of $T(\mathcal{C})$ has one point on the $x$-axis, there are $\approx q$ choices for that point and $\leq q^2$ choices for all other points.  So, $|T(\mathcal{C})|\lesssim q^{2k+1}$.  It follows that

\[
\mathcal{L}_{2k+1}(A_k(E_{q,s}))\lesssim (q^{-2/s})^{2k-1}q^{2k+1}.
\]

This tends to 0 as $q\to\infty$ provided $s<2-\frac{4}{2k+1}$, as claimed.

\end{document}